      \newtheorem{remark}{Remark}
      \newtheorem{theorem}{Theorem}
      \newtheorem{corollary}{Corollary}
      \newtheorem{lemma}{Lemma}
      \newtheorem{proposition}{Proposition}
      \def\N{{\mathbb N}}
      \def\C{{\mathbb C}}
      \def\cFdd'{\mathcal F_{dd'}}
      \def\cA{\mathcal A}
      \def\cB{\mathcal B}
\begin{document}
\newcommand\bigzero{\makebox(0,0){\text{\huge0}}}
\allowdisplaybreaks
\title{On the Gr\"{u}ss inequality for unital $2$-positive linear maps.}
\author[Balasubramanian]{Sriram Balasubramanian}
\address{Department of Mathematics\\
  IIT Madras, Chennai - 600036, India.}
\email{bsriram@iitm.ac.in, bsriram80@yahoo.co.in}

\subjclass[2010]{46L05, 47A63 (Primary), 47B65 (Secondary)}

\keywords{Gr\"{u}ss inequality, $C$*-algebra, $n$-positive, completely positive.}

\maketitle
%

\begin{abstract}
In a recent work, Moslehian and Raji\'{c} have shown that the Gr\"{u}ss inequality holds for unital $n$-positive
linear maps $\phi:\cA \rightarrow B(H)$, where $\cA$ is a unital $C$*-algebra and $H$ is a Hilbert space, if $n \ge 3$. They also demonstrate that the inequality fails to hold, in general, if $n = 1$ and question whether the inequality holds if $n=2$. In this article, we provide an
affirmative answer to this question.
%
\end{abstract}
\begin{section}{Introduction}

A classical theorem of Gr\"{u}ss (see \cite{G}) states that if $f$ and $g$ are bounded real valued integrable functions on $[a,b]$ and $m_1 \le f(x) \le M_1$ and $m_2 \le g(x) \le M_2$ for all $x \in [a,b]$, then
\begin{equation*}
\left \vert \frac{1}{b-a} \displaystyle \int_a^b f(x) g(x) dx - \frac{1}{(b-a)^2} \left( \displaystyle \int_a^b f(x) dx \right) \left(\displaystyle \int_a^b g(x) dx\right) \right \vert
 \le \frac 1 4 \alpha \beta,
\end{equation*}
where $\alpha = (M_1-m_1)$ and $\beta = (M_2 - m_2)$.\\

A generalized operator version of the Gr\"{u}ss inequality was given by Renaud in \cite{R}, where he proved the following result.

\begin{theorem}
 Let $A, B \in B(H)$ and suppose that their numerical ranges are contained in disks of radii $R$ and $S$  respectively. If $T \in B(H)$ is a positive operator with Tr(T) = $1$, where $Tr$ stands for the trace, then
\[
|Tr(T AB) - Tr(T A)\,Tr(T B)| 􏰀\le 4RS.
\]
If $A, B$ are normal, then the constant $4$ on the right hand side can be replaced by $1$.
\end{theorem}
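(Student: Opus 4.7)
The plan is to reinterpret the quantity $\mathrm{Tr}(TAB) - \mathrm{Tr}(TA)\,\mathrm{Tr}(TB)$ via the state $\varphi(X) := \mathrm{Tr}(TX)$ on $B(H)$; positivity of $T$ together with $\mathrm{Tr}(T) = 1$ ensures $\varphi \ge 0$ and $\varphi(I) = 1$, so the Cauchy--Schwarz inequality $|\varphi(X^*Y)|^2 \le \varphi(X^*X)\,\varphi(Y^*Y)$ for states is available.

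As a first step I would exploit the translation invariance of the quantity $\varphi(AB) - \varphi(A)\varphi(B)$: replacing $A$ by $A - aI$ (or $B$ by $B - bI$) for any scalar $a$ leaves the expression unchanged, as a one-line computation shows. Choosing $a, b$ to be the centers of the disks supplied by the hypothesis, I may assume the numerical ranges $W(A), W(B)$ are centered at the origin, so that the numerical radii satisfy $w(A) \le R$ and $w(B) \le S$.

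Next I would introduce $A_0 := A - \varphi(A) I$ and $B_0 := B - \varphi(B) I$. Direct expansions give $\varphi(A_0 B_0) = \varphi(AB) - \varphi(A)\varphi(B)$ and $\varphi(A_0 A_0^*) = \varphi(AA^*) - |\varphi(A)|^2 \le \varphi(AA^*) \le \|A\|^2$, with the analogous bound $\varphi(B_0^* B_0) \le \|B\|^2$. Applying Cauchy--Schwarz in the form
$$|\varphi(A_0 B_0)|^2 \le \varphi(A_0 A_0^*)\,\varphi(B_0^* B_0)$$
thus reduces the problem to bounding the norms $\|A\|$ and $\|B\|$ in terms of $R$ and $S$.

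The final ingredient is the classical numerical radius inequality $\|A\| \le 2\,w(A)$, with the sharper equality $\|A\| = w(A)$ when $A$ is normal. This gives $\|A\| \le 2R$, $\|B\| \le 2S$ in the general case, producing the constant $4$, and $\|A\| \le R$, $\|B\| \le S$ in the normal case, producing the constant $1$. The argument presents no real obstacle; the only point demanding some care is verifying that both the left-hand side of the target inequality \emph{and} the ``variance'' quantities $\varphi(A_0 A_0^*)$, $\varphi(B_0^* B_0)$ are genuinely invariant under the initial scalar translation, so that the reduction to origin-centered disks is without loss of generality.
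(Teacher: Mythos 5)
Your argument is correct: the state $\varphi(X)=\mathrm{Tr}(TX)$, the translation invariance of $\varphi(AB)-\varphi(A)\varphi(B)$, the Cauchy--Schwarz inequality applied to $A_0=A-\varphi(A)I$ and $B_0=B-\varphi(B)I$ (with the stars correctly placed, $\varphi(A_0A_0^*)$ and $\varphi(B_0^*B_0)$), and the numerical radius bounds $\|A\|\le 2\,w(A)$ in general and $\|A\|=w(A)$ for normal $A$ assemble into a complete proof of both constants. Note that the paper itself gives no proof of this statement --- it is quoted as background from Renaud's article [R] --- but your argument is essentially Renaud's original one, so there is nothing to flag beyond the observation that the reduction to origin-centered disks indeed preserves normality and leaves $A_0$, $B_0$ (hence the variance terms) unchanged, as you already noted.
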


Among other operator versions of the Gr\"{u}ss inequality, of particular interest to us are those of Peri\'{c} and Raji\'{c} (see \cite{PR}), where they prove the Gr\"{u}ss inequality for completely bounded maps, and Moslehian and Raji\'{c} (see \cite{MR}), where they prove the Gr\"{u}ss inequality for $n$-positive unital linear maps, for $n \ge 3$. In \cite{MR}, the authors show that the inequality fails to hold in general, if $n=1$ and question whether it holds for the case $n=2$. The main result of this article gives an affirmative answer to this question.

Before we state the main result, we shall introduce some notation and definitions. Throughout this article, $\cA$ will denote a unital $C$*-algebra, $M_n(\cA)$ the $C$*-algebra of $n \times n$ matrices over $\cA$, $H$ and $K$ complex Hilbert spaces and $B(H)$ the $C$*-algebra of bounded operators on $H$. The notations $e$, $1$ will denote the unit elements in $\cA$ and $B(H)$ respectively and $\phi:\cA \rightarrow B(H)$, a unital linear map, i.e. a linear map such that $\phi(e) = 1$. The map $\phi$ is said to be positive if $\phi(a)$ is positive in $B(H)$ for all positive $a \in \cA$.  For more details, see \cite{P}.  It is easy to see that the map $\phi_n: M_n(\cA) \rightarrow M_n(B(H))$ defined by $\phi_n((a_{ij})) = (\phi(a_{ij}))$ is unital and linear for each $n \in \N$. The map $\phi$ is said to be $n$-positive if $\phi_n$ is a positive map, completely positive if $\phi$ is $n$-positive for all $n \in \N$ and completely bounded if $\sup_{n \in \N} \|\phi_n\|  < \infty$. \\\\
The main result of this article is the following.
\begin{theorem}
\label{thm:main}
Let $\cA$ be a $C$*-algebra with unit $e$.
If $\phi: \cA \rightarrow B(H)$ is a unital $2$-positive linear map, then
\begin{equation}
\label{eq:main}
\|\phi(ab) - \phi(a) \phi(b) \| \le \left(\inf_{\lambda \in \C} \|a - \lambda e\| \right) \left(\inf_{\mu \in \C} \|b - \mu e\| \right).
\end{equation}
for all $a,b \in \cA$.
\end{theorem}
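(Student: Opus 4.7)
First, I would reduce the inequality to the homogeneous form $\|\phi(ab)-\phi(a)\phi(b)\|\le\|a\|\|b\|$ for all $a,b\in\cA$. Since $\phi$ is unital and linear, a direct computation gives
\[
\phi(ab)-\phi(a)\phi(b)=\phi\bigl((a-\lambda e)(b-\mu e)\bigr)-\phi(a-\lambda e)\phi(b-\mu e)\qquad(\lambda,\mu\in\C),
\]
so it suffices to establish the homogeneous bound, from which \eqref{eq:main} follows by applying it to $a-\lambda e$ and $b-\mu e$ and taking infima over $\lambda,\mu$.

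The engine of the argument would be Kadison's inequality applied to the amplified map $\phi_2\colon M_2(\cA)\to M_2(B(H))$. Because $\phi$ is $2$-positive and unital, $\phi_2$ is positive and unital, so Kadison's inequality gives $\phi_2(X^2)\ge\phi_2(X)^2$ for every self-adjoint $X\in M_2(\cA)$. I would apply this with $X=\begin{pmatrix}0 & a\\ a^* & b\end{pmatrix}$ when $b=b^*$. A short computation shows that $\phi_2(X^2)-\phi_2(X)^2$ is a positive $2\times 2$ matrix whose $(1,2)$ entry is $\phi(ab)-\phi(a)\phi(b)$, whose $(1,1)$ entry is $\phi(aa^*)-\phi(a)\phi(a^*)$, and whose $(2,2)$ entry is $[\phi(a^*a)-\phi(a^*)\phi(a)]+[\phi(b^2)-\phi(b)^2]$. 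From the standard fact that a positive $2\times 2$ operator matrix with diagonal entries $P,R$ and off-diagonal entry $Q$ satisfies $\|Q\|^2\le\|P\|\,\|R\|$, together with the bilinear scaling $(a,b)\mapsto(a/t,\,tb)$ --- which leaves $\phi(ab)-\phi(a)\phi(b)$ invariant while rescaling the $a$- and $b$-Kadison--Schwarz defects as $1/t^2$ and $t^2$ respectively --- letting $t\to\infty$ kills the cross term and yields
\[
\|\phi(ab)-\phi(a)\phi(b)\|^2\le\|\phi(aa^*)-\phi(a)\phi(a^*)\|\cdot\|\phi(b^2)-\phi(b)^2\|\le\|a\|^2\|b\|^2,
\]
settling the homogeneous bound when $b$ is self-adjoint. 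The mirrored choice $X=\begin{pmatrix}a & b\\ b^* & 0\end{pmatrix}$ with $a=a^*$ settles the self-adjoint-$a$ case.

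The main obstacle is to upgrade ``one of $a,b$ is self-adjoint'' to arbitrary $a,b\in\cA$ without losing the sharp constant $1$. A naive Hermitian splitting $b=b_1+ib_2$ combined with the triangle inequality yields only $\|\phi(ab)-\phi(a)\phi(b)\|\le 2\|a\|\|b\|$, the customary ``Schwarz-map'' factor of $2$ that the theorem rules out. Closing this gap is the genuine point at which $2$-positivity must be used more delicately than Kadison's inequality alone permits. I would look for either a more elaborate self-adjoint element of $M_2(\cA)$ whose $\phi_2(X^2)-\phi_2(X)^2$ places $\phi(ab)-\phi(a)\phi(b)$ in an off-diagonal block for non-self-adjoint $b$, or a Russo--Dye reduction to the case that $a$ is a unitary combined with a tight Kadison--Schwarz estimate exploiting $U^*U=UU^*=e$ in the spirit of the factorization $\phi(Ub)-\phi(U)\phi(b)=V^*\pi(U)(I-VV^*)\pi(b)V$ available via the Stinespring dilation for completely positive maps.
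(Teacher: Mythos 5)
Your reduction to the homogeneous bound $\|\phi(ab)-\phi(a)\phi(b)\|\le\|a\|\,\|b\|$ is exactly the paper's closing step, and your Kadison-plus-scaling argument is correct as far as it goes: the computation of $\phi_2(X^2)-\phi_2(X)^2$ checks out, and the trick of sending $t\to\infty$ to decouple the two Kadison--Schwarz defects cleanly yields the constant-$1$ bound when one of $a,b$ is self-adjoint. That is a genuine partial result, obtained differently from anything in the paper. But, as you yourself concede, the proof stops there, and the step you leave open is precisely the hard content of the theorem. Neither escape route you sketch is carried out, and neither works as stated: a $2\times2$ self-adjoint $X=\bigl(\begin{smallmatrix} p & q\\ q^* & r\end{smallmatrix}\bigr)$ has off-diagonal entry $pq+qr$ in $X^2$ with $p,r$ self-adjoint, so there is no evident way to land $ab$ there when neither factor is self-adjoint; and the Stinespring factorization $\phi(Ub)-\phi(U)\phi(b)=v^*\pi(U)(1-vv^*)\pi(b)v$ is unavailable because $\phi$ is only $2$-positive and hence admits no Stinespring dilation on all of $\cA$. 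The naive Hermitian splitting really does lose the factor $2$, so the general case is not a routine corollary of what you proved.

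The paper closes exactly this gap with Proposition \ref{prop:2and3pos}: for \emph{unitaries} $a,b$ it exhibits an explicit $4\times4$ matrix over $\cA$ that is positive by a Schur-complement factorization exploiting $b^*b=e$, regroups it as a $2\times2$ matrix over $M_2(\cA)$ of the special form $\bigl(\begin{smallmatrix} x & y\\ y^* & x\end{smallmatrix}\bigr)$, and applies Choi's Lemma \ref{lem:choi} to the merely \emph{positive} map $\phi_2$ --- this is the one place $2$-positivity is used, in lieu of the $3$-positivity required by the Moslehian--Raji\'{c} argument. The output is the covariance-matrix positivity \eqref{eq:vimp}, hence $\|\phi(ab)-\phi(a)\phi(b)\|^2\le\|\phi(aa^*)-\phi(a)\phi(a)^*\|\,\|\phi(b^*b)-\phi(b)^*\phi(b)\|$ for unitaries. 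Each variance factor is then bounded by $\bigl(\inf_\lambda\|a-\lambda e\|\bigr)^2\le 1$ via Stinespring applied only to the commutative $C$*-subalgebra generated by a single unitary (Theorem \ref{thm:stinespring} and Corollary \ref{cor:normal}), where positivity upgrades to complete positivity for free; Russo--Dye (Theorem \ref{thm:rd}) and convexity then give the homogeneous bound for arbitrary $a,b$. Until you supply an analogue of Proposition \ref{prop:2and3pos}, your argument establishes the theorem only when $a$ or $b$ is self-adjoint, which is strictly weaker than the claim.
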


To prove Theorem \ref{thm:main}, we use the well-known theorems of Stinespring, Russo-Dye, Fuglede-Putnam, and the result due to Choi (see Lemma \ref{lem:choi}).

\end{section}

\section{Preliminaries}
In this section we include some lemmas which will be used in the sequel.
Observe that if $\cA$ and $\mathcal B$ are unital $C$*-algebras and $\gamma:\cA \rightarrow \mathcal B$ is a unital $n$-positive linear map, then it is $m$-positive for all $m = 1, 2, \dots, n$. In particular $\gamma$ is positive. It is well known that positive maps are *-preserving. i.e. $\gamma(a^*) = \gamma(a)^*$ for all $a \in \cA$. Moreover $\|\gamma\| = 1.$

\begin{lemma}
\label{lem:2by2pos}
If $P,Q,R \in B(H)$, then $A = \begin{pmatrix}  P & R \\ R^* & Q \end{pmatrix}  \succeq 0$ in $M_2(B(H))$ if and only if $P,Q \succeq 0$ and $| \langle Rx,y \rangle|^2 \le \langle Px,x \rangle \langle Qy,y \rangle$, for
all $x,y \in H$. Moreover, if $A \succeq 0$, then $\|R\|^2 \le \|P\| \|Q\|$.
\end{lemma}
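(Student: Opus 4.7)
The plan is to establish each of the three assertions by analyzing the quadratic form $\langle A\xi,\xi\rangle$ on the direct sum $H \oplus H$, where $\xi = x \oplus y$. Expanding and using the identity $\langle R^*x,y\rangle = \overline{\langle Ry,x\rangle}$ gives
\begin{equation*}
\langle A(x\oplus y), x\oplus y\rangle = \langle Px,x\rangle + 2\real\langle Ry,x\rangle + \langle Qy,y\rangle,
\end{equation*}
and the entire proof is driven by this identity.

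For the forward direction, assume $A \succeq 0$. Testing against $x \oplus 0$ and $0 \oplus y$ immediately yields $P, Q \succeq 0$. To obtain the Cauchy--Schwarz-type bound, I would replace $y$ by $t e^{i\theta}y$ with $t > 0$, choosing the phase $\theta$ so that the cross term becomes $-2t|\langle Ry,x\rangle|$. The discriminant of the resulting nonnegative real quadratic in $t$ must then be nonpositive, which yields the stated inequality (after accounting for the inner-product convention relating $|\langle Ry,x\rangle|$ to $|\langle Rx,y\rangle|$ via complex conjugation).

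For the reverse direction, assume $P, Q \succeq 0$ and the Cauchy--Schwarz-type bound. Then combining the expansion above with the hypothesis and the elementary AM--GM inequality gives
\begin{equation*}
\langle A(x\oplus y), x\oplus y\rangle \ge \langle Px,x\rangle - 2\sqrt{\langle Px,x\rangle\langle Qy,y\rangle} + \langle Qy,y\rangle = \bigl(\sqrt{\langle Px,x\rangle} - \sqrt{\langle Qy,y\rangle}\bigr)^2 \ge 0,
\end{equation*}
so $A \succeq 0$. The norm bound is then an immediate corollary: $\langle Px,x\rangle \le \|P\|\|x\|^2$ and $\langle Qy,y\rangle \le \|Q\|\|y\|^2$ combined with $\|R\| = \sup\{|\langle Rx,y\rangle| : \|x\|, \|y\| \le 1\}$ give $\|R\|^2 \le \|P\|\|Q\|$.

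I do not anticipate any substantive obstacle; the arguments are standard and essentially amount to completing the square. The only mildly delicate step is the phase rotation in the forward direction, which is required to convert the real-bilinear inequality coming from positivity of the quadratic form into the complex-modulus bound appearing in the statement.
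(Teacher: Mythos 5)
The paper does not prove this lemma at all --- it is quoted as ``well known'' with a citation to Ando [A] --- so there is no in-text argument to compare yours against. Your quadratic-form computation is the standard route, and its mechanics are sound: testing $A$ on $x\oplus 0$ and $0\oplus y$ gives $P,Q\succeq 0$; the phase rotation and discriminant give the Cauchy--Schwarz-type bound; completing the square gives the converse; and the passage to $\|R\|^2\le\|P\|\|Q\|$ is immediate.

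There is, however, a genuine problem hiding in the step you dismiss as ``accounting for the inner-product convention'': it cannot be accounted for. Since $(A(x\oplus y))_1 = Px+Ry$, the cross term in your expansion is $2\,\real\,\langle Ry,x\rangle$, and your phase/discriminant argument delivers $|\langle Ry,x\rangle|^2\le\langle Px,x\rangle\langle Qy,y\rangle$. Complex conjugation turns $\langle Ry,x\rangle$ into $\langle R^*x,y\rangle$, \emph{not} into $\langle Rx,y\rangle$, and the distinction matters: with $H=\C^2$, $P=\mathrm{diag}(1,0)$, $Q=\mathrm{diag}(0,1)$ and $R$ the matrix unit sending $e_2\mapsto e_1$, $e_1\mapsto 0$, the block matrix $A$ equals $vv^*$ for $v=e_1\oplus e_2$ and hence $A\succeq 0$, yet taking $x=e_2$, $y=e_1$ gives $|\langle Rx,y\rangle|^2=1$ while $\langle Px,x\rangle\langle Qy,y\rangle=0$. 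So the inequality exactly as printed in the lemma is false for this positive $A$, and the gap in your forward direction is unfixable as stated; the correct statement (and the one your argument actually proves) has $|\langle R^*x,y\rangle|^2$, equivalently $|\langle Ry,x\rangle|^2$, on the left, and the same correction is needed in the hypothesis of your converse direction. None of this damages the paper: only the ``Moreover'' clause is invoked (to pass from \eqref{eq:vimp} to \eqref{eq:normineq}), and since $\|R\|=\|R^*\|=\sup\{|\langle Ry,x\rangle| : \|x\|\le 1,\ \|y\|\le 1\}$, the bound $\|R\|^2\le\|P\|\|Q\|$ follows from your correctly oriented inequality exactly as you describe.
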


\begin{lemma}
\label{lem:blockpos}
Let $A = \begin{pmatrix}  T & S \\ S^* & R \end{pmatrix} \in B(H \oplus K)$. If $R \in B(K)$ be invertible, then the following statements are equivalent.
\begin{itemize}
\item [(i)] $A \succeq 0$
\item [(ii)] $T, R \succeq 0$ and $T \succeq SR^{-1}S^*$.
\end{itemize}
\end{lemma}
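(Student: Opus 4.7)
My plan is to prove the equivalence via an explicit Schur-complement congruence. I set
\[
L := \begin{pmatrix} I_H & 0 \\ R^{-1}S^* & I_K \end{pmatrix}, \qquad D := \begin{pmatrix} T - SR^{-1}S^* & 0 \\ 0 & R \end{pmatrix},
\]
both viewed in $B(H \oplus K)$, and the first step is to verify by direct block multiplication the factorization $A = L^* D L$. This computation uses $(R^{-1})^* = R^{-1}$, which is legitimate because $R = R^*$ holds under either hypothesis: in (ii) as part of $R \succeq 0$, and in (i) because $A \succeq 0$ forces $A^* = A$ and hence $R^* = R$.

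Next, I observe that $L$ is block lower-triangular with identities on the diagonal, so it is invertible with the explicit triangular inverse
\[
L^{-1} = \begin{pmatrix} I_H & 0 \\ -R^{-1}S^* & I_K \end{pmatrix}.
\]
Hence $A = L^* D L$ is a congruence by an invertible operator, which preserves positivity in both directions: the identity $\langle A\xi, \xi\rangle = \langle D(L\xi), L\xi\rangle$ (and its counterpart using $L^{-1}$) gives $A \succeq 0$ if and only if $D \succeq 0$. Since $D$ is block-diagonal, the standard check with vectors of the form $(x, 0)$ and $(0, y)$ shows that $D \succeq 0$ is equivalent to $T - SR^{-1}S^* \succeq 0$ together with $R \succeq 0$.

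Assembling these steps, (ii) $\Rightarrow$ (i) is immediate: the hypotheses give $D \succeq 0$, hence $A \succeq 0$. For (i) $\Rightarrow$ (ii), the congruence delivers $T \succeq SR^{-1}S^*$ and $R \succeq 0$; the remaining claim $T \succeq 0$ then follows directly from $A \succeq 0$ by testing on vectors of the form $(x, 0) \in H \oplus K$, since $\langle A(x,0), (x,0)\rangle = \langle Tx, x\rangle$. I do not expect any serious obstacle in this proof: everything reduces to a one-line algebraic factorization and the evident positivity criterion for a block-diagonal operator. The only minor caveat is tracking adjoints of $R^{-1}$ inside $L^*$, which is handled automatically by the self-adjointness of $R$ ensured by either set of hypotheses.
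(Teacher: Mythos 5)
Your proof is correct. The paper itself does not prove this lemma --- it simply records it as well known and refers the reader to Ando \cite{A} --- so there is no in-paper argument to compare against; your Schur-complement congruence $A = L^*DL$ with $L$ unit lower triangular is exactly the standard proof one finds there. All the delicate points are handled properly: you justify $R = R^*$ under either hypothesis before using $(R^{-1})^* = R^{-1}$ in $L^*$, you use the invertibility of $L$ to transfer positivity in both directions, and you recover $T \succeq 0$ in the forward direction by compressing $A$ to $H \oplus \{0\}$ rather than trying to extract it from the Schur complement. Nothing further is needed.
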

The above two lemmas are well known. Their proofs can be found in \cite{A}.

\begin{lemma}[Choi]
\label{lem:choi}
Let $\mathcal U$ and $\mathcal V$ be $C$*-algebras and $\phi: \mathcal U \rightarrow \mathcal V$ be a positive linear map. If $x,y \in \mathcal U$ and
$
\begin{pmatrix}
x & y \\  y^* & x
\end{pmatrix} \succeq 0,
$
then
$
\begin{pmatrix}
\phi(x) & \phi(y) \\  \phi(y^*) & \phi(x)
\end{pmatrix} \succeq 0.
$
\end{lemma}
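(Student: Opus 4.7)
The plan is to verify $N := \begin{pmatrix}\phi(x) & \phi(y) \\ \phi(y^*) & \phi(x)\end{pmatrix} \succeq 0$ via Lemma \ref{lem:2by2pos}. Set $M := \begin{pmatrix} x & y \\ y^* & x\end{pmatrix}$. Positivity of $M$ forces $x = x^* \succeq 0$, so positivity of $\phi$ (which is automatically $*$-preserving) yields $\phi(x) = \phi(x)^* \succeq 0$. It therefore suffices, after faithfully realising $\mathcal V$ in some $B(H)$, to verify
\begin{equation*}
|\langle \phi(y) v, u\rangle|^2 \le \langle \phi(x) u, u\rangle \langle \phi(x) v, v\rangle \qquad (u, v \in H).
\end{equation*}

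The key observation I would use is that every positive linear functional $\mathcal U \to \C$ is automatically $2$-positive, since maps into a commutative $C^*$-algebra are completely positive iff positive. For each $w \in H$, $\rho_w := \omega_w \circ \phi$ (with $\omega_w(T) := \langle Tw,w\rangle$) is a positive, hence $2$-positive, functional on $\mathcal U$. Applying $(\rho_w)_2$ entrywise to $M \succeq 0$, and using $\phi(y^*) = \phi(y)^*$ to rewrite $\rho_w(\phi(y^*)) = \overline{\rho_w(\phi(y))}$, gives
\[
\begin{pmatrix} \langle \phi(x) w, w\rangle & \langle \phi(y) w, w\rangle \\ \overline{\langle \phi(y) w, w\rangle} & \langle \phi(x) w, w\rangle \end{pmatrix} \succeq 0 \quad \text{in } M_2(\C),
\]
equivalently, $|\langle \phi(y) w, w\rangle| \le \langle \phi(x) w, w\rangle$ for every $w \in H$; this is the diagonal ($u = v = w$) case of the required inequality.

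The main obstacle is upgrading this numerical-range bound to the bilinear inequality for distinct $u, v$. My plan is to substitute $w := u + \lambda v$ with $\lambda \in \C$ into the diagonal bound $|\langle \phi(y) w, w\rangle|^2 \le \langle \phi(x) w, w\rangle^2$, expanding both sides as polynomials in $\lambda$ and $\bar\lambda$. The delicate point is that $\phi(y)$ need not be self-adjoint, so the expansion mixes $\langle \phi(y) v, u\rangle$ with $\langle \phi(y) u, v\rangle$, and these are not complex conjugates; however, the relation $\phi(y^*) = \phi(y)^*$ pairs $\langle \phi(y^*) u, v\rangle$ with $\overline{\langle \phi(y) v, u\rangle}$, and combining this with the inequalities obtained by replacing $\lambda$ with $-\lambda$ and with $\pm i\lambda$ lets me isolate $|\langle \phi(y) v, u\rangle|^2$. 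A discriminant analysis on the resulting real quadratic should then produce exactly the required bilinear bound, completing the verification of $N \succeq 0$ via Lemma \ref{lem:2by2pos}. Checking that this polarisation introduces no spurious constant factor (a naive averaging over $w$ would overshoot the geometric mean $\sqrt{\langle \phi(x) u,u\rangle \langle \phi(x) v,v\rangle}$ by the arithmetic mean) is the main technical hurdle.
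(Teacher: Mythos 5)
The paper offers no proof of this lemma---it is quoted verbatim from Choi (Corollary 4.4 of \cite{C})---so your argument has to stand entirely on its own, and it does not: the polarisation step you defer to the end is not a technical hurdle but the point where the proof fails. What your vector-state argument actually establishes is only the diagonal bound $|\langle \phi(y)w,w\rangle| \le \langle \phi(x)w,w\rangle$ for every $w \in H$, i.e.\ a numerical-range domination, and this is strictly weaker than the bilinear inequality $|\langle \phi(y)v,u\rangle|^2 \le \langle \phi(x)u,u\rangle\langle \phi(x)v,v\rangle$ that Lemma \ref{lem:2by2pos} requires. A concrete obstruction: take $P = I$ and $R = \tfrac{3}{2}\begin{pmatrix} 0 & 1 \\ 0 & 0 \end{pmatrix}$ on $\C^2$. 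The numerical radius of $R$ is $\tfrac{3}{4}$, so $|\langle Rw,w\rangle| \le \langle Pw,w\rangle$ for every $w$; yet $|\langle Re_2,e_1\rangle| = \tfrac{3}{2} > 1 = \sqrt{\langle Pe_1,e_1\rangle\langle Pe_2,e_2\rangle}$ and $\begin{pmatrix} P & R \\ R^* & P\end{pmatrix} \not\succeq 0$ since $\|R\| > 1$. Hence no substitution $w = u + \lambda v$ combined with the replacements $\lambda \mapsto \pm\lambda, \pm i\lambda$ can recover the off-diagonal inequality from the diagonal one: polarisation reconstructs the sesquilinear form $\langle \phi(y)v,u\rangle$ exactly, but the \emph{inequality} degrades by the numerical-radius-versus-norm factor of $2$, which is precisely the ``spurious constant'' you flag and which is unavoidable by this route.

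The underlying reason is that composing $\phi$ with states of $B(H)$ and invoking their complete positivity only tests the matrix $\begin{pmatrix}\phi(x) & \phi(y)\\ \phi(y)^* & \phi(x)\end{pmatrix}$ against states of $B(H)$ applied entrywise, whereas its positivity in $M_2(B(H))$ must be tested against vector states of $H \oplus H$, which do not factor through a single state of $B(H)$. For a fixed pair $u,v$ the relevant functional is $m \mapsto \langle (V^*\phi(m)V)\,\cdot,\cdot\rangle$ with $V:\C^2 \to H$ sending the standard basis to $u,v$; the compression $m \mapsto V^*\phi(m)V$ is a positive map into $M_2(\C)$ that need not be $2$-positive, and asserting that it nevertheless behaves well on matrices of the special shape $\begin{pmatrix} x & y \\ y^* & x\end{pmatrix}$ is exactly the lemma you are trying to prove, so the route is circular. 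A correct proof must exploit the special structure of that matrix beyond what states detect---as Choi does via his Schwarz-type inequalities for positive maps---so you should either supply such an argument or simply cite \cite{C} as the paper does.
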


For a proof of Lemma \ref{lem:choi}, please see Corollary 4.4 of \cite{C}.

\begin{proposition}
\label{prop:2and3pos}
If $\mathcal B$ is a unital $C$*-algebra and $\phi:\mathcal B \rightarrow B(H)$ is a unital $2$-positive linear map, then
\begin{equation}
\label{eq:impineq}
\| \phi(ab) - \phi(a) \phi(b) \|^2 \le \| \phi(aa^*) - \phi(a) \phi(a)^* \| \| \phi(b^*b) - \phi(b)^*\phi(b) \|,
\end{equation}
for all unitaries $a,b \in \mathcal B$.
\end{proposition}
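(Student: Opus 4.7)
The strategy is to reduce \eqref{eq:impineq} to a block matrix positivity statement in $M_2(B(H))$, after which Lemma \ref{lem:2by2pos} extracts the norm inequality. Concretely, I would aim to prove that
$$
\Delta := \begin{pmatrix} \phi(aa^*) - \phi(a)\phi(a)^* & \phi(ab) - \phi(a)\phi(b) \\ \phi(b^*a^*) - \phi(b)^*\phi(a)^* & \phi(b^*b) - \phi(b)^*\phi(b) \end{pmatrix} \succeq 0
$$
in $M_2(B(H))$; since the off-diagonal block of $\Delta$ is precisely $\phi(ab) - \phi(a)\phi(b)$ and the diagonal blocks are the two factors on the right of \eqref{eq:impineq}, Lemma \ref{lem:2by2pos} then gives \eqref{eq:impineq} at once.

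The first ingredient is obtained by applying 2-positivity of $\phi$ to the rank-one positive matrix $\binom{a}{b^*}\begin{pmatrix} a^* & b \end{pmatrix} = \begin{pmatrix} aa^* & ab \\ b^*a^* & b^*b \end{pmatrix}$ in $M_2(\cA)$, which yields
$$
N := \begin{pmatrix} \phi(aa^*) & \phi(ab) \\ \phi(b^*a^*) & \phi(b^*b) \end{pmatrix} \succeq 0.
$$
Since $a$ and $b$ are unitary, $aa^* = b^*b = e$, so $N = \begin{pmatrix} 1 & \phi(ab) \\ \phi(ab)^* & 1 \end{pmatrix}$; and the Kadison--Schwarz inequality available from 2-positivity gives $\phi(a)\phi(a)^* \le 1$ and $\phi(b)^*\phi(b) \le 1$, so the diagonal blocks of $\Delta$ are positive. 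Thus the whole question is whether the rank-one positive matrix $\binom{\phi(a)}{\phi(b)^*}\begin{pmatrix} \phi(a)^* & \phi(b) \end{pmatrix}$ is dominated by $N$.

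The main obstacle is that difference of positives need not be positive. In the $3$-positive setting of \cite{MR}, this dominance is produced by applying $\phi_3$ to the rank-one positive matrix $\binom{a}{b^*}{e}\begin{pmatrix} a^* & b & e \end{pmatrix}$ in $M_3(\cA)$ and taking a Schur complement against the $(3,3)$-entry $\phi(e)=1$; with only $2$-positivity, this Schur step is not directly available and this is the crux of the proof. The plan is to mimic the Schur argument by using Choi's Lemma (Lemma \ref{lem:choi}) applied to a carefully chosen positive element of $M_2(\cA)$ of the form $\begin{pmatrix} X & Y \\ Y^* & X \end{pmatrix}$ built from $a$, $b$, $ab$, and $e$, exploiting the unitarity identities $aa^* = b^*b = e$ to force the cancellations that would otherwise demand $3$-positivity. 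Lemma \ref{lem:blockpos} is then invoked (with a standard $\epsilon I$-perturbation to handle non-invertibility of the diagonal blocks) to convert the resulting operator inequality into $\Delta \succeq 0$.

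Finally, with $\Delta \succeq 0$ in hand, Lemma \ref{lem:2by2pos} gives $\|\phi(ab) - \phi(a)\phi(b)\|^2 \le \|\phi(aa^*) - \phi(a)\phi(a)^*\| \cdot \|\phi(b^*b) - \phi(b)^*\phi(b)\|$, completing the proof. The hard part is the construction of the Choi-type witness matrix in Step~3: unitarity of $a$ and $b$ is indispensable there, since it is exactly the algebraic relations $aa^* = b^*b = e$ that close the gap between what $2$-positivity supplies and what the Schur-complement argument for $3$-positive maps otherwise requires.
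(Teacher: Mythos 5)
You have correctly identified the skeleton of the argument --- reduce \eqref{eq:impineq} to positivity of a $2\times 2$ block matrix of differences and then invoke Lemma \ref{lem:2by2pos} --- and you have correctly located the difficulty: producing that positivity from $2$-positivity alone, where the naive Schur-complement argument would ask for $3$-positivity. However, the step that carries the entire mathematical content, namely the explicit construction of the positive matrix to which Choi's Lemma is applied, is left as a placeholder (``a carefully chosen positive element \dots built from $a$, $b$, $ab$ and $e$''), so this is an outline rather than a proof. Moreover, the level at which you propose to place this witness cannot work as stated: an element of $M_2(\mathcal B)$ of the form $\begin{pmatrix} X & Y \\ Y^* & X\end{pmatrix}$ with $X,Y \in \mathcal B$, fed into Lemma \ref{lem:choi} for the positive map $\phi$, produces only a $2\times 2$ operator matrix over $B(H)$, whose Schur complement is a single operator inequality --- not the $2\times 2$ block positivity of your $\Delta$ that Lemma \ref{lem:2by2pos} requires. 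Worse, such an argument would use only positivity of $\phi$, and the inequality fails for merely positive maps (see the example at the end of Section \ref{sec:proof}), so no witness at that level can suffice.

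For comparison, the paper's construction is a specific $4\times 4$ matrix $A$ over $\mathcal B$ whose positivity is verified by Lemma \ref{lem:blockpos} using unitarity of $a,b$: its $3\times 3$ upper-left corner factors exactly as $SS^* = SR^{-1}S^*$ against the $(4,4)$ entry $R = b^*b = e$. Re-partitioned into $2\times 2$ blocks of $2\times 2$ blocks, $A$ takes the Choi form $\begin{pmatrix} x & y \\ y^* & x \end{pmatrix}$ with $x,y \in M_2(\mathcal B)$, and Lemma \ref{lem:choi} is applied to the positive map $\phi_2$ --- this is precisely where $2$-positivity enters. A $3\times 3$ principal submatrix of the resulting positive operator matrix is then Schur-complemented against its $(3,3)$ entry $\phi(a^*a) = 1$ (Lemma \ref{lem:blockpos} again, no $\epsilon$-perturbation needed since that entry is the identity), yielding your $\Delta$ up to the substitution $a \mapsto a^*$. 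To complete your argument you must exhibit such a matrix explicitly; everything else in your outline is routine once it is in hand.
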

\begin{proof}
Since $\phi$ is positive, recall that $\phi(x^*) = \phi(x)^*$ for all $x \in \cB$. Let $a,b \in \mathcal B$ be unitary. Consider the matrix
\[
A =
 \left( \begin{array}{ccc | c}
a^*a & a^*b & a^* & a^*(a^*b) \\ b^*a & b^*b & b^* & b^*(a^*b)\\ a & b & a^*a & a^*b \\ \hline
(b^*a)a & (b^*a)b & b^*a & b^*b \end{array} \right).
\]
Since $a, b$ are unitaries, it follows that $R= b^*b = e$ and
\[
T =
\begin{pmatrix}  a^*a & a^*b & a^* \\ b^*a & b^*b & b^*\\ a & b & a^*a \end{pmatrix} = \begin{pmatrix}
a^*(a^*b) \\   b^*(a^*b) \\  a^*b \end{pmatrix} \begin{pmatrix}
(b^*a)a & (b^*a)b & b^*a \end{pmatrix} = SS^* = SR^{-1}S^*.
\]
Thus Lemma \ref{lem:blockpos} implies that $A \succeq 0$. This is equivalent to
\begin{equation}
\label{eq:block}
\left( \begin{array}{cc | cc}
a^*a & a^*b & a^* & a^*(a^*b) \\ b^*a & b^*b & b^* & b^*(a^*b)\\ \hline a & b & a^*a & a^*b \\
(b^*a)a & (b^*a)b & b^*a & b^*b \end{array} \right) \succeq 0.
\end{equation}
By Lemma \ref{lem:choi} applied to the unital positive map $\phi_2$ and the $2 \times 2$ block matrix in equation \eqref{eq:block}, it follows that
\begin{align}
&\left( \begin{array}{cccc}
\phi(a^*a) & \phi(a^*b) & \phi(a)^* & \phi(a^*(a^*b)) \\ \phi(b^*a) & \phi(b^*b) & \phi(b)^* & \phi(b^*(a^*b))\\ \phi(a) & \phi(b) & \phi(a^*a) & \phi(a^*b) \\
\phi((b^*a)a) & \phi((b^*a)b) & \phi(b^*a) & \phi(b^*b) \end{array} \right)
\succeq 0.
\end{align}
This in turn implies that
\begin{equation}
\label{eq:imp}
\begin{pmatrix}
\phi(a^*a) & \phi(a^*b) & \phi(a)^* \\
\phi(b^*a) & \phi(b^*b) & \phi(b)^* \\
\phi(a) & \phi(b) & \phi(a^*a)
\end{pmatrix}
\succeq 0.
\end{equation}
By Lemma \ref{lem:blockpos} and the fact that $\phi(a^*a) = \phi(e) = 1$, equation \eqref{eq:imp} is equivalent to
\begin{equation}
\begin{pmatrix}
\phi(a^*a) & \phi(a^*b) \\
\phi(b^*a) & \phi(b^*b)
\end{pmatrix}
-
\begin{pmatrix}
\phi(a)^* \\
\phi(b)^*
\end{pmatrix}
\begin{pmatrix}
\phi(a) & \phi(b)
\end{pmatrix}
\succeq 0,
\end{equation}
i.e.
\begin{equation}
\label{eq:vimp}
\begin{pmatrix}
\phi(a^*a)-\phi(a)^* \phi(a) & \phi(a^*b)-\phi(a)^*\phi(b) \\
 \phi(b^*a)-\phi(b)^*\phi(a) & \phi(b^*b)-\phi(b)^*\phi(b)
\end{pmatrix}
\succeq 0.
\end{equation}
An application of Lemma \ref{lem:2by2pos} to the operator matrix in equation \eqref{eq:vimp} yields
\begin{equation}
\label{eq:normineq}
\|\phi(a^*b) - \phi(a)^* \phi(b)\|^2 \le \|\phi(a^*a) - \phi(a)^* \phi(a) \| \| \phi(b^*b) - \phi(b)^* \phi(b) \|
\end{equation}
for all unitaries $a, b \in \cB$. Replacing $a$ by $a^*$ in \eqref{eq:normineq} completes the proof.
\end{proof}

The following three theorems are well known.


\begin{theorem}[Fuglede-Putnam]
\label{thm:fp}
Let $\cA$ be a $C$*-algebra. If $x,y \in \cA$ are such that $x$ is normal and $xy=yx$, then $x^*y = yx^*$.
\end{theorem}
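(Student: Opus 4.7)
The plan is to adapt the classical Rosenblum argument, using Liouville's theorem for vector-valued entire functions. Passing to the unitization of $\cA$ if necessary, I may assume $\cA$ has a unit, so that the exponentials $e^{\lambda x}$ and $e^{\lambda x^*}$ are defined by their norm-convergent power series. The hypothesis $xy = yx$ yields $x^n y = y x^n$ for all $n \ge 0$ by induction, hence $e^{\mu x} y = y e^{\mu x}$ for every $\mu \in \C$; setting $\mu = -\bar\lambda$ rewrites this as $y = e^{-\bar\lambda x} \, y \, e^{\bar\lambda x}$ for all $\lambda \in \C$.

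Next, I define the entire $\cA$-valued function
\[
f(\lambda) = e^{\lambda x^*} \, y \, e^{-\lambda x^*}, \qquad \lambda \in \C.
\]
Substituting the identity above for the middle $y$, and using that normality of $x$ makes $e^{\lambda x^*}$ and $e^{-\bar\lambda x}$ commute, the factors regroup to
\[
f(\lambda) = e^{\lambda x^* - \bar\lambda x} \, y \, e^{-(\lambda x^* - \bar\lambda x)}.
\]
The exponent $A_\lambda := \lambda x^* - \bar\lambda x$ satisfies $A_\lambda^* = -A_\lambda$, so $u_\lambda := e^{A_\lambda}$ is a unitary in $\cA$, and consequently $\|f(\lambda)\| = \|u_\lambda \, y \, u_\lambda^{-1}\| = \|y\|$ for every $\lambda \in \C$. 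Thus $f$ is a bounded entire function.

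Finally, for every bounded linear functional $\varphi$ on $\cA$ the scalar entire function $\varphi \circ f$ is bounded and hence constant by Liouville's theorem; since $\cA^*$ separates points, $f$ itself is constant, so $f(\lambda) = f(0) = y$ for all $\lambda$. This says $e^{\lambda x^*} y = y e^{\lambda x^*}$ identically in $\lambda$, and differentiating at $\lambda = 0$ produces $x^* y = y x^*$, as required. The only delicate point is arranging the computation so that the exponent controlling $f$ is skew-adjoint rather than merely self-adjoint, and this is precisely where both the conjugate $\bar\lambda$ and the normality of $x$ are jointly used; the rest of the argument is a routine application of the holomorphic functional calculus and Liouville's theorem.
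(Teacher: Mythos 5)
Your proof is correct: it is the classical Rosenblum argument for the Fuglede--Putnam theorem, and every step checks out --- the reduction to the unitization, the identity $y = e^{-\bar\lambda x}\, y\, e^{\bar\lambda x}$ derived from $xy=yx$, the regrouping (valid because normality makes $x$ and $x^*$ commute) so that the exponent $\lambda x^* - \bar\lambda x$ is skew-adjoint and exponentiates to a unitary, the resulting uniform bound $\|f(\lambda)\| = \|y\|$, and the vector-valued Liouville argument followed by differentiation at $\lambda = 0$. Note that the paper itself gives no proof of this statement; it records it as a well-known theorem and refers the reader to Blackadar, so there is nothing internal to compare against. Two cosmetic remarks: the regrouped expression for $f(\lambda)$ involves $\bar\lambda$ and is therefore not visibly holomorphic, but you correctly establish holomorphy from the original definition $e^{\lambda x^*}\, y\, e^{-\lambda x^*}$ and use the regrouped form only for the norm estimate; and your closing reference to the ``holomorphic functional calculus'' is unnecessary --- the norm-convergent exponential power series is all the argument uses.
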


For more on the Fuglede-Putnam theorem, please see \cite{B}.

\begin{theorem}[Stinespring's Dilation Theorem]
\label{thm:stinedil}
If $\mathcal B$ is a unital $C$*-algebra and $\phi:\mathcal B \rightarrow B(H)$ is a unital completely positive map, then there exist a Hilbert space $K$, an isometry $v: H \rightarrow K$ and a unital *-homomorphism $\pi: \cB \rightarrow B(K)$ such that $\phi(x) = v^* \pi(x) v$ for all $x \in \cB$.
\end{theorem}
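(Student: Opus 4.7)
The plan is to use the standard GNS-type construction on the algebraic tensor product $\cB \otimes H$, exploiting complete positivity precisely at the step where we need the candidate inner product to be non-negative.

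First I would form the algebraic tensor product $K_0 = \cB \otimes H$ and define a sesquilinear form on it by
\begin{equation*}
\left\langle \sum_i a_i \otimes h_i,\ \sum_j b_j \otimes k_j \right\rangle := \sum_{i,j} \langle \phi(b_j^* a_i) h_i, k_j \rangle.
\end{equation*}
The key observation is that for a single finite sum $\xi = \sum_i a_i \otimes h_i$ with $n$ terms, the quantity $\langle \xi, \xi \rangle$ equals $\langle \phi_n((a_j^* a_i)_{ij}) \mathbf h, \mathbf h \rangle$ where $\mathbf h = (h_1, \ldots, h_n) \in H^n$. Since $(a_j^* a_i)_{ij} = A^*A \succeq 0$ in $M_n(\cB)$ and $\phi$ is completely positive (so $\phi_n$ is positive on $M_n(\cB)$), this inner product is non-negative. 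This is the crucial use of complete positivity.

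Next I would quotient by the null space $N = \{\xi \in K_0 : \langle \xi, \xi \rangle = 0\}$ (which by Cauchy--Schwarz is a subspace) to obtain a genuine inner product on $K_0/N$, and then take the Hilbert space completion $K$. I would define $\pi: \cB \to B(K)$ first on $K_0$ by
\begin{equation*}
\pi(x)\left(\sum_i a_i \otimes h_i\right) = \sum_i (x a_i) \otimes h_i.
\end{equation*}
To show $\pi(x)$ is bounded (and therefore descends to $K_0/N$ and extends to $K$), I would estimate $\langle \pi(x)\xi, \pi(x)\xi\rangle = \langle \phi_n(A^* x^* x A) \mathbf h, \mathbf h\rangle$ and use the operator inequality $A^* x^* x A \preceq \|x\|^2 A^* A$ in $M_n(\cB)$ together with positivity of $\phi_n$ to conclude $\|\pi(x)\xi\|^2 \le \|x\|^2 \|\xi\|^2$. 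Multiplicativity $\pi(xy) = \pi(x)\pi(y)$, unitality $\pi(e) = I_K$, and the *-property $\pi(x^*) = \pi(x)^*$ are then immediate from the definitions and the formula for the inner product.

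Finally, I would define $v : H \to K$ by $v(h) = [e \otimes h]$, the class of $e \otimes h$ in the completion. Unitality of $\phi$ gives $\|v(h)\|^2 = \langle \phi(e) h, h\rangle = \|h\|^2$, so $v$ is an isometry. The identification $\phi(x) = v^* \pi(x) v$ is then just the computation
\begin{equation*}
\langle v^* \pi(x) v\, h, k\rangle = \langle \pi(x)(e \otimes h), e \otimes k\rangle = \langle \phi(e^* \cdot x) h, k\rangle = \langle \phi(x) h, k\rangle.
\end{equation*}
The main obstacle is really only conceptual rather than technical: one must correctly set up the sesquilinear form so that complete positivity of $\phi$ translates cleanly into positivity of the form, and then verify that the left-multiplication action $\pi(x)$ is well-defined modulo the null space $N$ and bounded in norm, for which the operator inequality $A^* x^* x A \preceq \|x\|^2 A^*A$ together with positivity of $\phi_n$ is the decisive ingredient.
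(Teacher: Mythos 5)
The paper does not prove this statement; it is a quoted classical theorem, with the reader referred to Paulsen's book for the standard proof. Your proposal is precisely that standard GNS-style construction and is correct: complete positivity enters exactly where you say (positivity of the form on $\cB \otimes H$ via $\phi_n$ applied to $(a_i^*a_j)_{ij} \succeq 0$), the inequality $A^*x^*xA \preceq \|x\|^2 A^*A$ gives boundedness and hence well-definedness of $\pi(x)$ on the quotient, and unitality of $\phi$ makes $v$ an isometry.
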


For a proof of Stinespring's dilation theorem, please see \cite{P}.

\begin{theorem}[Russo-Dye]
\label{thm:rd}
Let $\cA$ be a unital $C$*-algebra. If $a \in \cA$ is such that $\|a\| < 1$, then $a$ is a convex combination of unitary elements in $\cA$.
\end{theorem}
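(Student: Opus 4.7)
The plan is to bootstrap from a functional-calculus construction for self-adjoint contractions, then reduce the general case by Cartesian decomposition and an iterative argument. For a self-adjoint $h\in\cA$ with $\|h\|\le 1$, set $u = h + i(e-h^2)^{1/2}$, where $(e-h^2)^{1/2}$ is defined via continuous functional calculus (the spectrum of $h$ lies in $[-1,1]$, so $e-h^2$ is positive). Since $h$ and $(e-h^2)^{1/2}$ lie in the commutative $C^*$-subalgebra generated by $e$ and $h$, a direct computation gives $uu^* = u^*u = h^2 + (e-h^2) = e$, so $u$ is unitary. Then $h = \tfrac{1}{2}(u + u^*)$ exhibits $h$ as the mean of two unitaries.

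For $a \in \cA$ with $\|a\|<\tfrac{1}{2}$, write $a = h + ik$ where $h = (a+a^*)/2$ and $k = (a-a^*)/(2i)$ are self-adjoint with $\|h\|,\|k\|\le\|a\|$. Then $2h$ and $2k$ are self-adjoint contractions, so the previous paragraph yields unitaries $u_1,u_2,v_1,v_2\in\cA$ with $2h = \tfrac{1}{2}(u_1+u_2)$ and $2k = \tfrac{1}{2}(v_1+v_2)$. Since $iv_1$ and $iv_2$ are also unitary,
\[
a \;=\; \tfrac{1}{4} u_1 + \tfrac{1}{4} u_2 + \tfrac{1}{4}(iv_1) + \tfrac{1}{4}(iv_2)
\]
is a convex combination of four unitaries in $\cA$.

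Extending this from the half-ball to the full open unit ball is the hard step. Following Russo and Dye, I would argue inductively on $n$ that every $a$ with $\|a\|\le 1 - 2/n$ is an arithmetic mean of $n$ unitaries in $\cA$: the inductive step ``peels off'' a unitary $u\in\cA$ chosen so that the rescaled residual $(na - u)/(n-1)$ again satisfies the hypothesis with $n-1$ in place of $n$. Given $\|a\|<1$, choosing $n$ large enough that $\|a\|\le 1 - 2/n$ then realizes $a$ as a convex combination of $n$ unitaries. The principal technical obstacle lies in producing the requisite unitary $u\in\cA$ with the required distance from $na$: because a general $C^*$-algebra lacks polar decomposition, one typically performs a dilation-type construction (working in $M_2(\cA)$, or in the enveloping von Neumann algebra $\cA^{**}$ where polar decomposition is available) and then verifies that the resulting unitary may be taken inside $\cA$ itself.
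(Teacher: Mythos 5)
The paper itself offers no proof of this statement --- it is quoted as a standard result with a pointer to \cite{B} --- so your proposal can only be judged against the standard argument. Your first two paragraphs are correct and complete: $u=h+i(e-h^2)^{1/2}$ is unitary because $h$ and $(e-h^2)^{1/2}$ commute inside the commutative $C$*-subalgebra generated by $e$ and $h$, and the Cartesian decomposition then exhibits any $a$ with $\|a\|<\tfrac12$ as a convex combination of four unitaries. That part is a genuine proof of a weaker statement.

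The third paragraph is where the theorem actually lives, and there you have a plan rather than a proof, and the plan as stated does not close. With the inductive hypothesis that $\|c\|\le 1-2/(n-1)$ forces $c$ to be a mean of $n-1$ unitaries, peeling a unitary $u$ off an $a$ with $\|a\|\le 1-2/n$ requires $\|na-u\|\le n-3$, whereas $\|na\|$ may be as large as $n-2$ and the triangle inequality only guarantees $\|na-u\|\le n-1$; nothing in your sketch produces a $u$ that strictly decreases the norm, and you explicitly defer exactly this point. The standard way to close the gap (Gardner; Kadison--Pedersen) is the lemma that for $u$ unitary and $\|x\|<1$ the element $u+x$ is a \emph{sum of two unitaries}: write $u+x=u(e+u^*x)$, note $e+u^*x$ is invertible, so $\tfrac12(u+x)$ is an invertible contraction, and an invertible contraction $b$ has a polar decomposition $b=w|b|$ with $w=b|b|^{-1}$ unitary \emph{inside} $\cA$, whence $b$ is the mean of the two unitaries obtained from your first paragraph applied to $|b|$. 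Iterating this lemma on $na=e+(n-1)\cdot\frac{na-e}{n-1}$, where $\left\|\frac{na-e}{n-1}\right\|<1$ precisely when $\|a\|<1-2/n$, finishes the theorem. In particular your stated worry that polar decomposition is unavailable and that one must pass to $M_2(\cA)$ or $\cA^{**}$ is a red herring: polar decomposition exists for invertible elements of any unital $C$*-algebra, and invertibility is exactly what the lemma supplies. As written, the proposal proves the theorem only for $\|a\|<\tfrac12$ and leaves the passage to the full open unit ball unestablished.
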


For a proof and more on the Russo-Dye theorem, please see \cite{B}.

\section{The Proof}
\label{sec:proof}

This section contains the proof of our main result, i.e. Theorem \ref{thm:main}. The following theorem and corollary lead us to it.

\begin{theorem}
\label{thm:stinespring}
If $a,b$ are commuting normal elements in the unital $C$*-algebra $\cA$ and $\phi: \cA \rightarrow B(H)$ is a unital positive linear map, then
\begin{equation}
\label{eq:commnormal}
\| \phi(ab) - \phi(a) \phi(b) \| \le \left(\inf_{\lambda \in \C} \|a - \lambda e\| \right) \left(\inf_{\mu \in \C} \|b - \mu e\| \right),
\end{equation}
i.e. the Gr\"{u}ss inequality holds for such $a,b \in \cA$.
\end{theorem}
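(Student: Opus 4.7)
My plan is to reduce to the commutative case, where Stinespring's theorem applies, and then exploit a simple projection identity that falls out of the dilation.

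First, since $a$ is normal and $ab = ba$, Theorem \ref{thm:fp} yields $a^*b = ba^*$; applying Fuglede--Putnam once more with $b$ playing the role of the normal element gives $ab^* = b^*a$. Consequently $\{a,a^*,b,b^*\}$ pairwise commute, so the unital $C$*-subalgebra $\cC \subseteq \cA$ generated by $a$ and $b$ is commutative. The restriction $\phi|_\cC$ is then a unital positive linear map on a commutative $C$*-algebra, and so is completely positive by the standard fact that for $\cC \cong C(X)$, positive maps into $B(H)$ correspond to positive operator-valued measures and are automatically CP. Therefore Theorem \ref{thm:stinedil} applies to $\phi|_\cC$: there exist a Hilbert space $K$, an isometry $v: H \to K$, and a unital $*$-homomorphism $\pi: \cC \to B(K)$ with $\phi(x) = v^*\pi(x)v$ for all $x \in \cC$.

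Let $P = I_K - vv^*$, the projection onto the orthogonal complement of $v(H)$. Since $v$ is an isometry, $v^*v = I_H$, which gives the crucial identities $v^*P = 0$ and $Pv = 0$. Using multiplicativity of $\pi$ on the commuting pair $a,b \in \cC$,
\begin{equation*}
\phi(ab) - \phi(a)\phi(b) = v^*\pi(a)\pi(b)v - v^*\pi(a)vv^*\pi(b)v = v^*\pi(a)\,P\,\pi(b)\,v.
\end{equation*}
The identities $v^*P = 0 = Pv$ allow arbitrary scalar shifts to be inserted for free: for every $\lambda, \mu \in \C$, expanding the right-hand side below shows that the three cross terms (which contain a factor of $v^*P$ or $Pv$) vanish, leaving
\begin{equation*}
v^*\pi(a)\,P\,\pi(b)\,v = v^*\pi(a-\lambda e)\,P\,\pi(b-\mu e)\,v.
\end{equation*}
Taking norms and using $\|v\| = 1$, $\|P\| \le 1$, and the fact that the $*$-homomorphism $\pi$ is a contraction yields
\begin{equation*}
\|\phi(ab) - \phi(a)\phi(b)\| \le \|a - \lambda e\|\,\|b - \mu e\|;
\end{equation*}
taking the infimum over $\lambda, \mu \in \C$ gives \eqref{eq:commnormal}.

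The only non-routine ingredient is the reduction \emph{positive on a commutative $C$*-algebra implies completely positive}, which is what permits Stinespring's theorem to be invoked under the mere assumption that $\phi$ is positive. Fuglede--Putnam is precisely what makes this reduction available: without it one would only know that $a$ and $b$ commute, not that the entire $*$-algebra they generate is commutative, and the commutative CP theorem would not be applicable.
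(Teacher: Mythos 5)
Your proposal is correct and follows essentially the same route as the paper: Fuglede--Putnam to make the $C$*-subalgebra generated by $a$ and $b$ commutative, automatic complete positivity of positive maps on commutative $C$*-algebras, Stinespring's dilation, and the bound $\|v^*\pi(a-\lambda e)(1-vv^*)\pi(b-\mu e)v\| \le \|a-\lambda e\|\,\|b-\mu e\|$. The only cosmetic difference is that you insert the scalar shifts after dilating, via $v^*P = 0 = Pv$, whereas the paper inserts them before dilating using unitality and linearity of $\phi$; the two maneuvers are equivalent.
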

\begin{proof}
The proof is adapted from \cite{PR}. Let $\lambda, \mu \in \C$. Since $a,b$ are commuting normal elements in the $C$*-algebra $\cA$, it follows from Theorem \ref{thm:fp} that the $C$*-subalgebra of $\cA$, say $\cB$, generated by $a, b$ and $e$ is commutative. Since the restricted map $\phi: \cB \rightarrow B(H)$ is positive and $\cB$ is commutative, it follows that $\phi: \cB \rightarrow B(H)$ is in fact completely positive (see e.g. \cite{P}). By Theorem \ref{thm:stinedil}, it follows that there exist a Hilbert space $K$, an isometry $v: H \rightarrow K$ and a unital *-homomorphism $\pi: \cB \rightarrow B(K)$ such that $\phi(x) = v^* \pi(x) v$ for all $x \in \cB$. Since $\pi$ is a unital *-homomorphism, it is completely positive and hence is a complete contraction. In particular $\| \pi \| \le 1$. It follows that
\begin{align*}
\| \phi(ab) - \phi(a) \phi(b) \|  & =  \|\phi((a - \lambda e)(b - \mu e)) - \phi(a - \lambda e) \phi(b - \mu e)\| \\
                & = \| v^* \pi((a - \lambda e)(b - \mu e)) v - v^* \pi(a - \lambda e) vv^* \pi(b - \mu e) v \|\\
                & = \| v^* \pi(a - \lambda e) \pi(b - \mu e) v - v^* \pi(a - \lambda e) vv^* \pi(b - \mu e) v \|\\
                & = \|v^* \pi(a - \lambda e) (1 - vv^*) \pi(b - \mu e) v\| \\
                & \le \|a - \lambda e\| \|1 - vv^*\| \|b - \mu e\|\\
                & \le \|a - \lambda e\| \|b - \mu e\|.
\end{align*}
The proof is complete by taking infimums on the above inequality first with respect to $\lambda$ and then with respect to $\mu$.
\end{proof}

\begin{remark}
\label{rem:cp}
It is easy to see that if $\cA$ is commutative or $\phi$ is completely positive, in the statement of Theorem \ref{thm:stinespring}, then the entire proof of Theorem \ref{thm:stinespring} goes through with $\cB$ replaced by $\cA$, for arbitrary $a$ and $b$, i.e. the Gr\"{u}ss inequality \eqref{eq:commnormal} holds if $\cA$ is commutative or $\phi$ is completely positive.
\end{remark}

\begin{corollary}
\label{cor:normal}
If $\phi$ and $a$ are as in Theorem \ref{thm:stinespring}, then
\[
\|\phi(aa^*) - \phi(a) \phi(a)^*\| \le \left(\inf_{\lambda \in \C} \|a - \lambda e\| \right)^2.
\]
\end{corollary}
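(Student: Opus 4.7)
The plan is to apply Theorem \ref{thm:stinespring} directly with $b := a^*$. Since $a$ is normal, $a$ and $a^*$ commute and both are normal, so the pair $(a, a^*)$ satisfies the hypotheses of Theorem \ref{thm:stinespring}. This immediately yields
\[
\|\phi(aa^*) - \phi(a)\phi(a^*)\| \le \left(\inf_{\lambda \in \C}\|a-\lambda e\|\right)\left(\inf_{\mu \in \C}\|a^* - \mu e\|\right).
\]

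The remaining work is purely bookkeeping. First, since $\phi$ is positive, it is $*$-preserving, so $\phi(a^*) = \phi(a)^*$, which matches the left-hand side of the desired inequality. Second, taking adjoints inside the norm gives $\|a^* - \mu e\| = \|(a - \overline{\mu}e)^*\| = \|a - \overline{\mu}e\|$, so the substitution $\mu \mapsto \overline{\mu}$ shows
\[
\inf_{\mu \in \C}\|a^* - \mu e\| = \inf_{\lambda \in \C}\|a - \lambda e\|.
\]
Combining these two observations converts the product of infima on the right into the square of a single infimum, yielding the claimed bound.

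There is no real obstacle here; the only point to be careful about is verifying that the hypotheses of Theorem \ref{thm:stinespring} really do apply to the pair $(a, a^*)$, which is immediate from the normality of $a$. The corollary is, in essence, just the specialization $b = a^*$ of the preceding theorem.
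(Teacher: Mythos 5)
Your proof is correct and is exactly the paper's argument: the paper also just sets $b = a^*$ in Theorem \ref{thm:stinespring}. You have merely spelled out the routine verifications (normality and commutativity of the pair $(a,a^*)$, $\phi(a^*)=\phi(a)^*$, and $\inf_\mu\|a^*-\mu e\| = \inf_\lambda\|a-\lambda e\|$) that the paper leaves implicit.
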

\begin{proof}
The proof follows by taking $b = a^*$ in Theorem \ref{thm:stinespring}.
\end{proof}

\begin{proof}[ \textbf{Proof of Theorem \ref{thm:main}}]
Recall $a$, $b$, $\cA$, $H$ and $\phi$ from the statement of Theorem \ref{thm:main}. Let $\epsilon > 0$. By Theorem \ref{thm:rd}, there exist unitary elements $u_1, \dots, u_k$ and $v_1, \dots, v_{\ell}$ in $\cA$ such that $\frac{a}{(\|a\| + \epsilon)} =  \sum_{i = 1}^k \alpha_i u_i$ and $\frac{b}{(\|b\| + \epsilon)} =  \sum_{j = 1}^{\ell} \beta_j v_j$, where $\alpha_i, \beta_j \ge 0$ and $ \sum_{i = 1}^k \alpha_i = \sum_{j=1}^{\ell} \beta_j = 1$. It follows from
Proposition \ref{prop:2and3pos} and Corollary \ref{cor:normal} that
\begin{align}
\frac{1}{(\|a\| + \epsilon)} & \frac{1}{(\|b\| + \epsilon)} \| \phi(ab) - \phi(a) \phi(b)\| \nonumber \\
&= \left\Vert \phi \left(\left(  \sum_{i = 1}^k \alpha_i u_i\right) \left( \sum_{j = 1}^{\ell} \beta_j v_j \right) \right) - \phi \left( \displaystyle \sum_{i = 1}^k \alpha_i u_i \right) \phi \left(  \sum_{j = 1}^{\ell} \beta_j v_j\right) \right\Vert \nonumber\\
& \le \sum_{i=1}^k \sum_{j=1}^{\ell} \alpha_i \beta_j  \| \phi(u_iv_j) - \phi(u_i) \phi(v_j) \|  \\
&\le  \sum_{i=1}^k \sum_{j=1}^{\ell} \alpha_i \beta_j \|\phi(u_iu_i^*) - \phi(u_i) \phi(u_i)^*\|^{\frac 1 2}
\|\phi(v_j^*v_j) - \phi(v_j)^* \phi(v_j)\|^{\frac 1 2} \\
& \le  \sum_{i=1}^k \sum_{j=1}^{\ell} \alpha_i \beta_j \left(\inf_{\lambda \in \C} \|u_i - \lambda e\| \right) \left(\inf_{\mu \in \C} \|v_j - \mu e\| \right) \nonumber\\
& \le  \sum_{i=1}^k \sum_{j=1}^{\ell} \alpha_i \beta_j \|u_i\|\|v_j\| \nonumber\\
& = \sum_{i=1}^k \sum_{j=1}^{\ell} \alpha_i \beta_j \nonumber\\
& =  \, \left(\sum_{i=1}^k \alpha_i \right)  \left(\sum_{j=1}^{\ell} \beta_j \right) \nonumber \\
& = \, 1. \nonumber
\end{align}
Letting $\epsilon \rightarrow 0$ above yields,
\begin{equation}
\label{eq:norm}
\| \phi(ab) - \phi(a) \phi(b) \| \le \|a\| \|b\|.
\end{equation}
Let $\lambda, \mu \in \C$ be arbitrary. It follows from equation \eqref{eq:norm} that
\begin{align*}
\|\phi(ab) - \phi(a) \phi(b) \| &= \|\phi((a - \lambda e)(b - \mu e)) - \phi(a - \lambda e) \phi(b - \mu e)\| \\
                                           & \le \|(a - \lambda e)\| \|(b - \mu e)\|.
\end{align*}
Taking infimums in the above inequality, first with respect to $\lambda$ and then with respect to $\mu$ completes the proof.
%
\end{proof}


The Gr\"{u}ss inequality fails, in general, when $\phi$ in Theorem \ref{thm:main} is assumed only to be positive, i.e. when $n = 1$, as the following example shows. We point out that \cite{MR} contains an example of such a map $\phi: M_2(\C) \rightarrow M_2(\C)$.\\\\
\textbf{Example:}
Let $k \ge 2$, $\beta = \{e_1,e_2,\dots,e_k\}$ be an orthonormal set in $H$, $E$ = span$(\beta)$, and $\theta:M_k(\C) \rightarrow M_k(\C)$ denote the transpose map. It is well known that $\theta$ is a unital positive linear map, which is not $2$-positive (see \cite{TT}). Define $\phi:M_k(\C) \rightarrow B(H)$ by $\phi(a) = \begin{pmatrix}  \theta(a) & \textbf{0} \\ \textbf{0} & \textbf{1} \end{pmatrix}$. The block structure is with respect to the orthogonal decomposition $E \oplus E^{\perp}$ of $H$. Here $\textbf{1}$ denotes the identity operator and $\textbf{0}$ denotes the zero operator. It is easy to see that $\phi$ is a unital positive linear map which is not $2$-positive. Let $a= \begin{pmatrix} 1 & 3 \\ 3 & 3 \end{pmatrix}  \oplus \textbf{0}_{k-2} \in M_k(\C)$ and $b = \begin{pmatrix} 1 & 0 \\ 0 & 3 \end{pmatrix}  \oplus \textbf{0}_{k-2} \in M_k(\C)$. A simple computation shows that the eigenvalues of $a$ belong to $\{0, 2 \pm \sqrt{10}\}$ and those of $b$ belong to $\{0, 1, 3\}$. Since $a$ and $b$ are normal, it follows from \cite{S} that,
\begin{equation}
\label{eq:infs}
\inf_{\lambda \in \C} \|a - \lambda e\|  = \sqrt{10} \text{   \, \, \,  and  \, \, \,   }
 \inf_{\mu \in \C} \|b - \mu e\|  = \frac{3}{2}.
\end{equation}
Moreover
\begin{align*}
\phi(ab) - \phi(a) \phi(b) &= \left(\left( \begin{pmatrix}  1 & 3 \\ 9 & 9 \end{pmatrix} \oplus \textbf{0}_{k-2} \right) \oplus \textbf{1} \right)- \left(\left(\begin{pmatrix} 1 & 9 \\ 3 & 9 \end{pmatrix} \oplus \textbf{0}_{k-2} \right) \oplus \textbf{1} \right)
\\
&= \left(\left( \begin{pmatrix}  0 & -6 \\ 6 & 0 \end{pmatrix} \oplus \textbf{0}_{k-2} \right) \oplus \textbf{0} \right).
\end{align*}
Thus,
\[
\|\phi(ab) - \phi(a) \phi(b)\| = 6 >  \sqrt{10} \cdot \frac{3}{2}  = \left(  \inf_{\lambda \in \C} \|a - \lambda e\| \right) \left(   \inf_{\mu \in \C} \|b - \mu e\|  \right).
\]
\textbf{Acknowledgements:} I would like to thank Prof. Scott McCullough for some useful discussions and Krishanu Deyasi for his help with some MATLAB computations.

\end{document}